\newtheorem{theorem}{Theorem}[section]
\newtheorem{corollary}[theorem]{Corollary}
\newtheorem{lemma}[theorem]{Lemma}
\newtheorem{proposition}[theorem]{Proposition}
\theoremstyle{definition}
\newtheorem{definition}[theorem]{Definition}
\theoremstyle{remark}
\newtheorem{remark}[theorem]{\sc Remark}
\newtheorem{example}[theorem]{\sc Example}
\renewcommand{\Box}{\square}    
\newcommand{\diffeo}{{\rm{diffeo}}}
\newcommand{\Sing}{{\rm{Sing\hspace{2pt}}}}
\newcommand{\im}{{\rm{Im\hspace{2pt}}}}
\newcommand{\ity}{{\infty}}
\newcommand{\m}{\setminus}
\newcommand{\Si}{S_\infty}
\newcommand{\fin}{\hspace*{\fill}$\Box$\vspace*{2mm}}
\newcommand{\cL}{{\mathcal L}}
\newcommand{\bR}{{\mathbb R}}
\newcommand{\bC}{{\mathbb C}}
\newcommand{\bN}{{\mathbb N}}
\begin{document}

\title[Families of real curves with more than one parameters]{Bifurcation values of families of real curves}

%

\author{\sc Cezar Joi\c ta}
\address{Laboratoire Europ\' een Associ\'e  CNRS Franco-Roumain Math-Mode,
Institute of Mathematics of the Romanian Academy, P.O. Box 1-764,
 014700 Bucure\c sti, Romania}

\email{Cezar.Joita@imar.ro}

\author{Mihai Tib\u ar}
\address{Universit\'e de Lille 1, CNRS, UMR 8524 - Laboratoire Paul Painlev\'e, F-59000 Lille, France}
\email{tibar@math.univ-lille1.fr}


\subjclass[2010]{14D06, 58K05, 57R45, 14P10, 32S20, 58K15}

\keywords{bifurcation locus, polynomials maps, fibrations}

\thanks{The authors acknowledge the support of the Labex CEMPI
(ANR-11-LABX-0007-01). C. Joi\c ta acknowledges the
CNCS grant PN-II-ID-PCE-2011-3-0269.}


\begin{abstract}
In more than two variables, detection of the bifurcation set of polynomial mapping $\bR^n \to \bR^p$,  $n\ge p$, 
is a still  unsolved problem. In this note we provide a solution for $n= p+1 \ge 3$.
\end{abstract}

\maketitle


\section{Introduction}\label{s:intro}

 The bifurcation locus of a polynomial mapping $F : \bR^n \to \bR^p$, $n\ge p$,  is the minimal set of points $B(F) \subset \bR^p$ outside which the mapping is a C$^\ity$ locally trivial fibration. Unlike the local setting, the critical locus $\Sing F$ is not the only obstruction to the existence of  fibrations in the global setting. The simplest evidence of such a phenomenon in case $p=1$ is in the example of $f(x,y) = x + x^2y$, where $\Sing f =\emptyset$ but $B(F) = \{ 0\}$.
 In case $p>1$, Pinchuk \cite{Pi}  provided an example of a polynomial mapping $F: \bR^2 \to \bR^2$ where  $\Sing F =\emptyset$ but $B(F) \not= \emptyset$, which is a negative answer to the Jacobian Conjecture over the reals.  

In more than two variables, over the last 20 years one could only  estimate $B(F)$ by supersets $A \supset B(F)$ according to certain \emph{regularity conditions at infinity} \cite{Ti-reg}, \cite{Ra}, \cite{KOS}, \cite{Ti},
\cite{CT}, \cite{DRT} etc.  The bifurcation set $B(F)$  was shown to be detectable precisely only if $p=1$ and $n=2$, see  \cite{TZ},  \cite{CP}, \cite{HN}. A similar situation holds over the complex field, with a large number of articles in the last decades (see e.g. \cite{Ti} for references before 2007).

We address here the problem of detecting the bifurcation set in algebraic families of real curves of more than one parameter, in particular the case $n = p+1 \ge 3$. The methods developed in \cite{CP} or \cite{HN} cannot be extended beyond two variables since they are based essentially on the use of the ``polar locus'' or the ``Milnor set'' (see Definition \ref{d:rho-reg}) which are of dimension 1 only in the $n=2$ case. 
 Our task was to find a way to extend to higher dimensions the ideas established in \cite{TZ} for $n=2$. As a matter of fact we have to change the viewpoint of  \cite{TZ} and find completely new definitions for the \emph{non-vanishing} condition  and for the \emph{non-splitting} condition. 
We then get the following extension of the main result \cite{TZ}, keeping its  spirit and terminology.

\begin{theorem} \label{t:main}
 Let $X\subset \bR^m$ be a real nonsingular irreducible algebraic set of dimension $n\ge 3$ and  let  $F : X\to \bR^{n-1}$ be an algebraic map. Let $a$ be an interior point of the set $\im F \setminus \overline{F(\Sing F)}\subset \bR^{n-1}$ and let $X_{t} := F^{-1}(t)$.  Then $a\not\in B(F)$ 
if and only if the following two conditions are satisfied:
\begin{enumerate}
 \item 
 the Euler characteristic $\chi(X_{t})$ is constant when 
$t$ varies within some neighbourhood of $a$, 
and  

\item there is no  
component of $X_t$ which vanishes at infinity as $t$ tends to $a$.
\end{enumerate}
\smallskip
\noindent
The above criterion (a)+(b) may be replaced by (a')+(b') where:
\begin{enumerate}
 \item[(a')] the Betti numbers of $X_{b}$ are constant for $b$ in some neighbourhood of $a$,
and   
\item[(b')] there is no splitting at infinity at $a$.
\end{enumerate}
\end{theorem}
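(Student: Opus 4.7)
The forward direction is standard from the definition: if $a\notin B(F)$, then on a small neighborhood $U$ of $a$ the map $F$ is a $C^\ity$ locally trivial fibration, so all fibers $X_b$, $b\in U$, are diffeomorphic to $X_a$. This immediately gives constancy of $\chi(X_b)$ and of all Betti numbers, and rules out both vanishing and splitting of components at infinity.

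For the converse, choose $U$ an open ball around $a$ small enough that $\overline{U}\cap \overline{F(\Sing F)}=\emptyset$, so that $F$ restricted to $F^{-1}(U)$ is a submersion. The first step is to control $F$ over a large compact region: letting $\rho(x)=\|x\|^2$, a semialgebraic/curve-selection argument (in the spirit of the $\rho$-regularity criteria referenced in the Introduction) shows that for a dense set of large radii $R$, the sphere $\{\rho=R\}$ is transverse to every fiber $X_b$ for $b\in U$ (after possibly shrinking $U$). Ehresmann's theorem applied to the proper submersion $F : F^{-1}(U)\cap\{\rho\le R\}\to U$ then yields a trivialization, namely diffeomorphisms $h_R^b: X_a\cap\{\rho\le R\}\to X_b\cap\{\rho\le R\}$ depending smoothly on $b\in U$.

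The second step is to upgrade these ``compact-part'' trivializations to a genuine fibration by handling the tails $X_b\cap\{\rho>R\}$. Since each fiber is a smooth $1$-manifold, its connected components are either circles (contributing $0$ to $\chi$ and $1$ to $b_1$) or copies of $\bR$ (contributing $1$ to $\chi$, with two ends escaping to infinity). Condition (b) forces every compact component to remain inside some fixed $\{\rho\le R_0\}$ uniformly for $b$ near $a$, so $h_R^b$ already pairs up circles of $X_a$ with circles of $X_b$. Condition (a) then forces the number of $\bR$-components to be constant in $b$, hence the number of ends at infinity is constant; one extends $h_R^b$ to a homeomorphism $X_a\to X_b$ by pairing these ends and trivializing each half-line outward along a vector field lifting $\partial/\partial t$ which is transverse to $F$ and tangent to the fibers of $\rho$. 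The equivalence of (a)+(b) with (a')+(b') uses the identities $\chi(X_t)=b_0(X_t)-b_1(X_t)$ on a $1$-manifold together with the dichotomy that a failure of component count to be locally constant is caused either by vanishing (change in $b_1$) or by splitting (change in $b_0$) at infinity.

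The hard part is proving that the pairing of ends across the family can be made continuous, i.e. that the assumptions (a)+(b), which are pointwise numerical, actually imply the existence of a continuous family of homeomorphisms. Without (b) a circle may shrink to infinity as $t\to a$, and without the no-splitting hypothesis a single end at $a$ could bifurcate into several nearby ends, destroying any continuous matching. Ruling these pathologies out requires a careful analysis at infinity adapted from the $n=2$ techniques of \cite{TZ}: since fibers are one-dimensional, the high-dimensional Milnor set at infinity projects in a controlled way to $U$, and its behavior is captured by the combinatorics of ends, which is exactly what (a)+(b) pin down. This reduction from higher-dimensional Milnor geometry to end-counting on curves is the technical heart of the argument.
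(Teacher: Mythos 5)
Your forward direction is fine, and the reduction of (a')+(b') to (a)+(b) via $\chi=b_0-b_1$ is the right high-level picture. But the converse direction has two genuine gaps, and the route you sketch is not the one the paper actually follows; it also runs into known obstructions.

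First, the claim that condition (b) ``forces every compact component to remain inside some fixed $\{\rho\le R_0\}$ uniformly for $b$ near $a$'' is unjustified and in fact does not follow. The non-vanishing condition $NV(a)$ bounds $\mu(b)=\max_j\inf_{x\in X_b^j}\|x\|$, i.e.\ the \emph{infimum} of the norm on each component; a circle component of $X_b$ could contain a point near the origin while stretching out to arbitrarily large radius, so $NV$ gives no uniform outer bound on circle components. Your pairing of circles by $h_R^b$ therefore does not capture all circles, and constancy of the circle count is never actually established. The paper proves this constancy differently (Lemma 3.3): it restricts $F$ to the preimage of each line $L_{ab}$ through $a$ and $b$, producing a one-parameter family of curves to which the $n=2$ result of \cite{TZ} applies, and then ``peels off'' compact components one at a time using a covering by small balls $B_i$ on which $F$ is trivial and Ehresmann for the resulting proper submersion.

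Second, and more seriously, the step ``trivializing each half-line outward along a vector field lifting $\partial/\partial t$ which is transverse to $F$ and tangent to the fibers of $\rho$'' is precisely the mechanism that can fail here. Such a lift exists globally on the tails exactly when one has $\rho$-regularity at infinity, and the paper recalls the example $f(x,y)=y(2x^2y^2-9xy+12)$ from \cite{TZ} where $0\in S_0(f)$ (so $\rho$-regularity fails) yet $B(f)=\emptyset$. So the vector-field construction cannot be the basis of the proof of this theorem: it is a sufficient but not necessary condition, and (a)+(b) do not imply it. You acknowledge in the last paragraph that the ``hard part'' is unaddressed, but you frame the missing step as ``higher-dimensional Milnor geometry''; the paper's proof avoids the Milnor set entirely. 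Its key construction is the family of sets $\cL_i$, one for each noncompact component of $X_a$: each $\cL_i$ is the union, over $t$ in a small disk $D$, of the unique component of $X_t$ meeting a fixed small ball $B_{p_i}$; the technical work is to show these $\cL_i$ are open, connected, and pairwise disjoint, hence partition $F^{-1}(D)$ into open submanifolds on which Proposition 2.7 of \cite{TZ} gives triviality directly. This sidesteps sphere transversality altogether. In short: your outline would need both a correct treatment of unbounded circles and a replacement for the vector-field lifting, and the paper's $\cL_i$ partition is precisely that replacement.
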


Let us point out that the Euler characteristic of regular fibres is given by the following simple formula:
\[ \chi(X_{t}) = \frac{1}{2}  \lim_{R\to \ity} \# [X_t \cap S_R]
\]
 where $S_R \subset \bR^m$ denotes the sphere of radius $R$ centred at the origin.

In order to situate our study in the mathematical landscape, we start with  discussing in \S \ref{s:pinchuk} the real counterpart of several results well-known in the complex setting.


\section{Real versus complex setting} \label{s:pinchuk}

\subsection{The Abhyankar-Moh-Suzuki theorem}
  The famous example by Pinchuk \cite{Pi}  yields a polynomial mapping $\bR^2 \to \bR^2$ with no singularities but which is not a global diffeomorphism, thus providing a counter-example to the strong Jacobian Conjecture over the reals. The Jacobian problem remains nevertheless open over $\bC$. 

 We may then further ask what happens when a polynomial map is a component of a global diffeomorphism since,  over the complex field, one has the following well-known Abhyankar-Moh-Suzuki theorem  \cite{AM}, \cite{Su}:
 \emph{ A complex polynomial function $f : \bC^2 \to \bC$ which is a locally trivial fibration is actually equivalent to a linear function, modulo automorphisms of $\bC^2$. }

This result is again not true over $\bR$ and it is actually not difficult to find examples like the following:
\begin{example}\label{e:AbhMohSuz}
The polynomial function
 $g : \bR^2 \to \bR$, $g(x,y) = y (x^2 + 1)$ is a component of a diffeomorphism, fact that one can see 
by using the change of variables $(x,y) \mapsto (x, \frac{y}{x^2 +1})$. Therefore $g$ is a globally trivial fibration. However, $g$ cannot be linearised by a \emph{polynomial} automorphism. 

\end{example}


\subsection{The Euler characteristic test}\label{ss:euler}
The following result was found in the 70's \cite{Su}, \cite{HL}: 
\emph{Let $f : \bC^2 \to \bC$ be a polynomial function and let $a\in \bC\m f(\Sing f)$. Then $a \not\in B(f)$ if and only if the Euler characteristic of the fibres $\chi(f^{-1}(t))$ is constant for $t$ varying in some neighbourhood of $a$.}

Its real counterpart came out much later. It appears that for polynomial functions $\bR^2 \to \bR$
the constancy of the Euler characteristic of the fibres is not sufficient and that other phenomena may occur at infinity: the ``splitting'' or the ``vanishing'' of components of fibres (see Definition \ref{d:vanish}).

\begin{theorem}\label{t:2var} \rm  \cite{TZ}
\label{t:intro} \it 
Let $X$ be a real algebraic nonsingular surface and let  $\tau : X\to \bR$ be an algebraic map. Let $a\in \im \tau$ be a regular value of  $\tau$, and let $X_t := F^{-1}(t)$.  Then $a\not\in B(\tau)$ 
if and only if:
\begin{enumerate}
 \item 
 the Euler characteristic $\chi(X_{t})$ is constant when 
$t$ varies within some neighbourhood of $a$, and  

\item there is no  
component of $X_t$ which vanishes at infinity as $t$ tends to $a$. \fin
\end{enumerate}
\end{theorem}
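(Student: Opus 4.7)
The ``only if'' direction is automatic: a C$^\infty$ locally trivial fibration over a neighborhood of $a$ preserves the diffeomorphism type of fibres, so $\chi(X_{t})$ is locally constant and no component of a nearby fibre can escape to infinity as $t\to a$.

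For the converse, introduce $\rho : X \to \mathbb R_{\geq 0}$, the squared Euclidean distance to the origin in $\mathbb R^m$, and the \emph{Milnor set}
\[
M(\tau) := \{ x \in X : d\tau(x) \text{ and } d\rho(x) \text{ are linearly dependent}\}.
\]
Since $a \notin \tau(\Sing\tau)$ and the latter is closed in $\mathbb R$, there is an open interval $I \ni a$ on which $\tau|_{\tau^{-1}(I)}$ is a submersion; Ehresmann's theorem then trivialises $\tau : \tau^{-1}(I) \cap \overline{B_R} \to I$ for each $R$. The goal is to produce a matching trivialisation on the exterior region $\tau^{-1}(I) \cap \{\rho \geq R\}$, which amounts to constructing a smooth vector field tangent to the level sets of $\rho$ that lifts $\partial/\partial t$ under $\tau$. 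Such a vector field exists precisely when $(\tau,\rho)$ is a submersion on the exterior region, i.e.\ when
\[
M(\tau) \cap \tau^{-1}(I) \cap \{\rho \geq R\} = \emptyset
\]
for some large $R$ and some small neighborhood $I$ of $a$; call this property the \emph{$\rho$-regularity of $\tau$ at $a$}.

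The heart of the proof is to show that (a) and (b) imply $\rho$-regularity at $a$. Suppose, for contradiction, there is a sequence $x_k \in M(\tau)$ with $\rho(x_k) \to \infty$ and $\tau(x_k) \to a$. The Milnor set $M(\tau)$ is semialgebraic, so the curve selection lemma produces a real analytic arc $\gamma : (0,\varepsilon) \to M(\tau)$ with $\rho(\gamma(s)) \to \infty$ and $\tau(\gamma(s)) \to a$ as $s \to 0$. Along $\gamma$, the point $\gamma(s)$ is a critical point of the Morse function $\rho|_{X_{\tau(\gamma(s))}}$ on the one-dimensional fibre. For a smooth curve, the only possible Morse-theoretic events as $t$ crosses the critical value along $\gamma$ are: (i) birth or death of a compact circle component, or (ii) the joining or splitting of two non-compact branches. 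In case (i), the vanishing circle must escape to infinity as $t\to a$ (since $\rho(\gamma(s))\to\infty$), contradicting (b). In case (ii), the number of non-compact components of $X_t$ changes by $\pm 1$, and by the end-counting formula $\chi(X_t) = \frac{1}{2}\lim_{R\to\infty} \#[X_t \cap S_R]$ stated in the introduction, $\chi(X_t)$ jumps, contradicting (a). Either outcome is impossible, so $\rho$-regularity holds.

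Once $\rho$-regularity is in hand, it suffices to glue the interior Ehresmann trivialisation on $\tau^{-1}(I)\cap\{\rho\leq R+1\}$ with the exterior trivialisation obtained by integrating the submersion vector field on $\tau^{-1}(I)\cap\{\rho\geq R\}$, using a smooth partition of unity in the variable $\rho$; this yields a C$^\infty$ local trivialisation of $\tau$ over $I$, so $a\notin B(\tau)$. The main obstacle is the Morse-theoretic dichotomy along $\gamma$: one must rule out any third ``mixed'' scenario for how the fibres change. It is precisely the assumption $\dim X = 2$ that makes this classification exhaustive, since fibres are $1$-dimensional and the local model of a nondegenerate critical point of $\rho|_{X_t}$ is elementary; higher-dimensional fibres admit many more handle attachments and this is why the argument does not immediately extend.
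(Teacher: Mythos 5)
Your overall architecture (interior Ehresmann trivialisation glued to an exterior trivialisation obtained from a vector field tangent to the levels of $\rho$) is the standard one, and the ``only if'' direction is fine. But the central step --- that (a) and (b) imply $\rho$-regularity at $a$, i.e.\ that $M(\tau)\cap\tau^{-1}(I)\cap\{\rho\ge R\}=\emptyset$ for some $I\ni a$ and large $R$ --- is false, and the paper itself exhibits the counterexample: for $f(x,y)=y(2x^{2}y^{2}-9xy+12)$ one has $0\in S_0(f)$ (so $\rho$-regularity fails at $0$, and the Milnor set has branches going to infinity with $f\to 0$) while $B(f)=\emptyset$, so (a) and (b) both hold at $0$ by your own first paragraph. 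Over $\bR$, $\rho$-regularity is sufficient for $a\notin B(\tau)$ but not necessary, so no argument deriving $\rho$-regularity from (a)+(b) can succeed. The precise error is in your Morse-theoretic dichotomy: a point of $M(\tau)$ is a critical point of $\rho|_{X_t}$ for \emph{fixed} $t$, which describes how $X_t\cap B_R$ changes as $R$ varies, not how $X_t$ changes as $t$ varies. Since $a$ is a regular value, nearby fibres are all smooth and nothing ``joins or splits'' at finite distance; a local maximum of $\rho$ followed by a local minimum on the same non-compact branch of $X_t$ is merely a tangency of the fibre to a sphere and produces arcs of $M(\tau)$ escaping to infinity with no change whatever in $\chi(X_t)$ or in the set of components. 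This is exactly the ``third scenario'' you claim cannot occur. What actually detects vanishing/splitting is the \emph{parity} of $\#[X_t^j\cap M_c(f)]$ near infinity, as recalled in \S 2.3 of the paper.

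The proof in [TZ] (and its generalisation in \S 3 of the present paper) therefore does not go through the Milnor set at all. One first trivialises $F$ over a small disk $D$ in a neighbourhood of each compact component of $X_a$ by properness and Ehresmann, peeling these off one at a time; for the line components one builds, for each component $X_a^j$, the connected set $\cL_j$ swept out by the components of nearby fibres meeting a small ball $B_{p_j}$, and shows that the $\cL_j$ are open, pairwise disjoint (this uses the constancy of the number of components, which follows from (a) together with (b)) and exhaust $F^{-1}(D)$ (this uses non-vanishing, condition (b)); one then applies a separate trivialisation lemma ([TZ, Prop.\ 2.7]) to each restriction $F_|:\cL_j\to D$. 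If you want to salvage a Milnor-set route, you must replace ``$M(\tau)$ is empty near infinity'' by the parity criterion of \S 2.3, which requires counting intersections of $M_c(\tau)$ with individual connected components of the fibres.
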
 

One moreover shows that the above criterion (a)+(b) is equivalent to the 
following:\\
\ \ \ (c)\emph{ the Betti numbers of $X_{t}$ are constant for $t$ in some neighborhood of $a$},
and \\ 
\ \ \ (d)\emph{ there is no  
component of $X_t$ which splits at infinity as $t$ tends to $a$.}
\\

 All the above conditions (a)--(d) are necessary but none of them implies alone the local triviality of the map $\tau$, as the examples in \cite{TZ} show.
Our Theorem \ref{t:main} represents the extension of the above result to algebraic families of curves of more than one parameter.

\subsection{Detecting bifurcation values by the Milnor set} 

It was shown in \cite{Ti-reg}, \cite{DRT} that, in case of a polynomial map $F : \bR^n \to \bR^p$, the bifurcation non-critical locus  $B(F) \m f(\Sing f)$  is included in the set of ``$\rho$-nonregular values at infinity''. The\emph{ $\rho$-regularity} is a ``Milnor type'' condition that controls the transversality of the fibres of $F$ to the spheres centered at $c\in\bR^n$, more precisely:

\begin{definition}\label{d:milnor}\label{d:rho-reg}
Let $F\colon\bR^n\to\bR^p$ be a polynomial map,  where $n\geq p$. Let  $\rho_c\colon\bR^n \to \bR_{\geq 0}$ be the Euclidian distance function to the point $c\in\bR^n$.  We call \emph{Milnor set of $(F,\rho_c)$} the critical set of the mapping $(F, \rho_c)\colon\bR^n\to\bR^{p+1}$ and  denote it by $M_c(F)$. 
  We call:
\[
 S_c(F):=\{t_0\in\bR^p\mid\exists \{ x_j\}_{j\in \bN}\subset M_c(F), \lim_{j\to\infty}\| x_j\|=\infty\mbox{ and }\lim_{j\to\infty}F(x_j)=t_0\}
\]
the set of \emph{$\rho_c$-nonregular values at infinity}. If $t_0\notin S_c(F)$ we say that $t_0$ is \textit{$\rho_c$-regular at infinity}. We set  
$\Si (F):= \bigcap_{c\in \bR^n} S_c(F)$.
\end{definition}

In case of polynomials $f : \bC^2 \to \bC$ the following characterisation has been proved \cite[Cor.5.8]{ST}, \cite[Thm.2.2.5]{Ti}:  \emph{Let $a\in \bC\m f(\Sing f)$. Then $a\in B(f)$ if and only if $a \in S_0(f)$}.
 
This is not true anymore over the reals, as shown by the following example from \cite{TZ}: 
 $f:\mathbb{R}^{2}\rightarrow\mathbb{R}$, $f(x,y)=y(2x^{2}y^{2}-9xy+12)$, where $S_0(f)$ contains the origin of $\bR$ but the bifurcation set $B(f)$ is empty.

However, with some more information along the branches of the Milnor set $M_c(f)$ which take into account the ``vanishing'' and the ``splitting'' phenomena at infinity (see Definitions \ref{d:vanish} and \ref{d:split}), one is able to 
produce a criterion, as follows. First, there is some open dense set $\Omega_f \subset \bR^2$ such that for $c\in \Omega_f$ the Milnor set $M_c(f)$ is a curve (or it is empty). For such a point $c\in \Omega_f$  one counts the number $\# [X_t^j \cap M_c(f)]$ of points of intersection of the connected components $X_t^j$ of the fibres $X_t$ with the curve $M_c(f)$. The following criterion holds:  \emph{Let $a\in \bR\m f(\Sing f)$. Then  $a\in B(f)$ if and only if $a \in S_c(f)$ and $\lim_{t\to a}\# [X_t^j \cap M_c(f)] \not\equiv 0\ (mod\ 2)$ for some sequence of connected components $X_t^j$ of $X_t$.}
 This can be easily proved by using the results of our paper and is close to the main theorem of \cite{HN} which is proved for the larger class of polynomial functions defined on a smooth non-compact affine algebraic surface $X$. One of the significant  difference between our approach and that of \cite{HN} is that we test 
 \emph{connected components} $X_t^j$ of fibres and not just the fibres of $f$ as in \emph{loc.cit.} The reason is that one may have vanishing and splitting at infinity in two different components of the same fibre, with one maximum and one minimum which would cancel in the framework  of  \cite{HN} but not in the above statement.\footnote{see also \cite[\S 3 and Ex. 3.1]{TZ} for the construction of such examples.}





\section{The non-vanishing condition}\label{s:def}

\subsection{Non-vanishing at infinity}\label{ss:mu}
Let $X\subset \bR^m$ be a real nonsingular irreducible algebraic set of dimension $n$,   and let   $F : X\to \bR^{n-1}$ be an algebraic map. 
Throughout this section the point $a$ will denote an interior point of $\im F \setminus \overline{F(\Sing F)}$. 

 As before we denote by $X_b$ the fibre $F^{-1}(b)$. 
Let then $X_b = \sqcup_j X_b^j$ be the decomposition of the fibre $X_b$ into connected components.
Define:

\[ \mu(b)  := \max_j \inf_{x\in X_b^j} \|x\| \]

\begin{definition}\label{d:vanish}
We say that there is \emph{vanishing at infinity at $a\in\bR^{n-1}$}
if there exists a sequence of points $a_k \to a$ such that $\lim_{k\to \infty}  \mu(a_k) = \infty$.

If there is no such sequence,  we say that \emph{there is no vanishing at $a\in\bR^{n-1}$}  and we denote this situation shortly by $NV(a)$.
\end{definition}

\begin{remark}\label{r:nv}
One can easily deduce from the above definition that $NV$ is an open condition.
\end{remark}


\subsection{Proof of Theorem \ref{t:main}, first part}

The regular fibres of $F$ are 1-dimensional manifolds, hence every such fibre is a
 finite union of connected components. Each such component is either compact and thus diffeomorphic to a circle, or non-compact and thus diffeomorphic to the affine line $\bR$.
Let us denote by $s(b)$ the number of compact components of the fibre $F^{-1}(b)$
and by $l(b)$ the number of non-compact components of this fibre. Let us note that these definitions make sense 
for a semi-algebraic set $X$; we shall occasionally use them in such a context in the proofs below.
 
Let $a\in \bR^{n-1}$ be as in the statement of Theorem \ref{t:main} and let us assume $NV(a)$. 
By Remark \ref{r:nv}, there exists  a ball $D$ centered at $a$, included in the interior of the set $\im F \setminus \overline{F(\Sing F)}\subset \bR^{n-1}$ such that $NV(b)$ for any $b\in D$. For such a ball $D$,  we show:

\begin{lemma}\label{l:numbers}
 The numbers $s_X(b)$ and $l_X(b)$ are constant for $b\in D$.
\end{lemma}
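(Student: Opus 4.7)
It suffices to prove local constancy of $s$ and $l$ at each $b_0\in D$; constancy on $D$ then follows from connectedness. The strategy is to truncate the fibres by a large Euclidean ball so that Ehresmann's fibration theorem applies to the resulting compact family, and then to use the non-vanishing hypothesis to argue that the truncation captures the full counts of compact and non-compact components.

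By Lemma~\ref{l:nv} we may shrink $D$ to ensure a uniform bound $M>0$ with $\mu(b)\leq M$ for every $b\in D$, so that every connected component of every fibre $X_b$ meets the closed ball $\overline{B_M}\subset\bR^m$. Fix $b_0\in D$. Using a semi-algebraic Sard theorem applied to $\rho|_{X_{b_0}}$, together with the fact that the finitely many compact components of $X_{b_0}$ are bounded, choose $R>M$ large enough that $S_R$ is transverse to $X_{b_0}$ and $B_R$ contains every compact component of $X_{b_0}$ in its interior. By continuity, shrinking to a smaller ball $D'\ni b_0$ preserves transversality: $S_R\pitchfork X_b$ for every $b\in D'$. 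Then $Y:=F^{-1}(\overline{D'})\cap \overline{B_R}$ is a compact manifold with transverse boundary, $F|_Y$ is a proper submersion, and Ehresmann's theorem (in its version for manifolds with corners) yields a trivialisation $Y\cong \bigl(X_{b_0}\cap \overline{B_R}\bigr)\times \overline{D'}$. In particular, for every $b\in D'$ the diffeomorphism type of $X_b\cap \overline{B_R}$ is constant, and the numbers of circle components (closed 1-submanifolds contained in $B_R$) and of arc components (meeting $S_R$) of $X_b\cap \overline{B_R}$ are both independent of $b$.

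The circle components of $X_b\cap \overline{B_R}$ are precisely the compact components of $X_b$ lying inside $B_R$, so by the choice of $R$ and by applying local Ehresmann to tubular neighbourhoods of the compact components of $X_{b_0}$, one matches these bijectively with the compact components of $X_{b_0}$, contributing $s(b_0)$ compact components to each $X_b$. The arcs of $X_b\cap \overline{B_R}$, on the other hand, must be glued outside $\overline{B_R}$ either into non-compact components of $X_b$ running to infinity, or into compact components of $X_b$ extending beyond $B_R$. The main obstacle, and the only place where the non-vanishing hypothesis is essential, is to show that the second possibility does not occur and that the gluing of arcs into non-compact components is preserved as $b$ varies: any extra compact component of $X_b$ escaping $B_R$, or a re-gluing of two arc-ends that at $b_0$ belonged to different non-compact components, would produce a sequence of points moving to infinity along components of $X_{b}$ as $b\to b_0$; extracting a semi-algebraic arc by the curve selection lemma applied to the semi-algebraic set $\{(b,x)\in D\times\bR^m: x\in X_b\}$ would then yield either an entire component with infimum distance tending to infinity (contradicting $NV(b_0)$) or a new component at $b_0$ (contradicting the trivialisation of $F|_Y$). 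These contradictions give $s(b)=s(b_0)$ and $l(b)=l(b_0)$ throughout $D'$, and hence throughout $D$.
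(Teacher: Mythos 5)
There is a genuine gap. Your argument never invokes hypothesis (a) of Theorem~\ref{t:main} (constancy of the Euler characteristic near $a$), and the statement is simply false under $NV$ alone, so the proof cannot be repaired without bringing (a) in. Since each compact component of a regular fibre is a circle ($\chi=0$) and each non-compact one a line ($\chi=1$), one has $\chi(X_b)=l_X(b)$; Broughton's example $F(x,y)=x+x^2y$ at $a=0$ satisfies $NV(0)$ (every component of every nearby fibre meets a fixed ball), yet $l(0)=3$ while $l(t)=2$ for $t\neq 0$. This is exactly the configuration your last paragraph claims to exclude: the arcs of $X_t\cap\overline{B_R}$ are re-glued outside $\overline{B_R}$ by a compact arc (the piece of the fibre dipping down to $y\approx -1/(4t)$) that escapes to infinity as $t\to 0$. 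The curve selection lemma does produce points going to infinity along these fibres, but that yields neither ``an entire component with infimum distance tending to infinity'' (the component in question also passes near the origin, so $NV$ is not violated) nor ``a new component at $b_0$'' visible inside $\overline{B_R}$ (the trivialisation of $F|_Y$ is perfectly compatible with a different gluing pattern outside the ball). In short, $NV$ controls whether whole components escape, but says nothing about how arc-ends reconnect beyond any fixed sphere, and that reconnection is precisely what changes $s$ and $l$.

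The paper avoids this by a different mechanism: for $b\in D$ it restricts $F$ to the surface $F^{-1}(L_{ab})$ over the line $L_{ab}$ through $a$ and $b$, notes that this one-parameter family has finitely many atypical values, and applies the full two-variable theorem of \cite{TZ} at each of them --- which requires both the non-vanishing (supplied by Lemma~\ref{l:nv}) and the Euler characteristic constancy (supplied by hypothesis (a)). Triviality over $L_{ab}\cap D$ then gives $s_X(b)=s_X(a)$ and $l_X(b)=l_X(a)$. If you want to keep your ball-truncation framework, you must at minimum observe that (a) already forces $l_X$ to be locally constant (since $\chi=l$), and then use that together with $NV$ to control the compact components; as written, the ``main obstacle'' you correctly identify is not overcome.
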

\begin{proof}
  Let us fix some point $b\in D$ and let $L_{ab}\subset \bR^{n-1}$ denote the unique line passing through the points $a$ and $b$. The fibre $X_t$ is a 1-dimensional manifold for any $t\in D$, in particular the inverse image $F^{-1}(L_{ab})$ is an algebraic family of non-singular real curves. It is known (as proved by Thom, Verdier and others, see e.g. \cite[Cor. 1.2.13]{Ti}) that the projection $\tau_{ab} : F^{-1}(L_{ab}) \to L_{ab}$  has a finite number of atypical values. In the hypotheses of Theorem \ref{t:main} and by Remark \ref{r:nv}, at each supposed atypical value of $L_{ab}\cap D$ one may apply Theorem \ref{t:2var} for $\tau_{ab}$. This leads to the conclusion that there are no atypical values of $\tau_{ab}$ on $L_{ab}\cap D$, in particular 
the restriction of $F$ is a locally trivial fibration over $L_{ab}\cap D$, hence a trivial fibration.  This implies $s_X(b) = s_X(a)$ and $l_X(b)= l_X(a)$.
\end{proof}

\subsection{Compact components}\label{ss:compact}
Let us consider some compact connected component of the regular fibre $X_a$, if there is such.
Then this compact component may be covered by finitely many open connected sets $B_i\subset X$ such that $B_i\cap X_a$ is connected and that the restriction $F_| : B_i \to F(B_i)$ is a trivial fibration. In particular each fibre of this fibration is connected.  There exists a small enough closed  ball $D \subset \bR^{n-1}$ centered at $a$ which is contained in all images $F(B_i)$. It then follows that the restriction
$F_| :  F^{-1}(D) \cap \cup_{i} B_i \to D$
is a proper submersion. Therefore, by Ehresmann's fibration theorem, this is a locally trivial fibration, hence trivial (since $D$ is contractible).

It follows that, for any $t\in \mathring{D}$, there is a unique connected component of the fibre $X_t$ which intersects the open set $F^{-1}(\mathring{D})\cap \cup_{i} B_i$.

It also follows  that $\mathcal{D}  := F^{-1}(\mathring{D})\cap \cup_{i} B_i$ is an open connected component of  $F^{-1}(\mathring{D})$. Therefore 
 $F^{-1}(\mathring{D}) \m \mathcal{D}$ is an open subset of $F^{-1}(\mathring{D})$.

By Lemma \ref{l:numbers}  and by taking an eventually smaller ball $D$, we have that  for any $t\in \mathring{D}$, $X_t \cap F^{-1}(\mathring{D}) \m \mathcal{D}$ has precisely $l_X(a)$ connected non-compact components and  $s_X(a) -1$  connected compact components.

In this way we have produced a trivialisation on a connected component of $F^{-1}(\mathring{D})$
and we have reduced the problem to  constructing a trivialisation 
within the set $F^{-1}(\mathring{D}) \m \mathcal{D}$, where the numbers are: 
\[s_{F^{-1}(\mathring{D}) \m \mathcal{D}}(a) =  s_X(a) -1 \  \mbox{\  and \ } l_{F^{-1}(\mathring{D}) \m \mathcal{D}}(a) =  l_X(a). \]

We apply the above procedure until  we eliminate one by one all the compact components.  We may then assume from now on that the fibre $X_t$ has no compact component, for any $t$ in some neighbourhood of $a$.

\subsection{Line components}\label{ss:line}

Consider a line component $X_a^1$ of $X_a$ and fix some point $p\in X_a^1$. 
Since $F$ is a submersion at $p$, there exists a small ball $B_p$ at $p$ such that $B_p \cap X_a$ is connected and that 
the restriction of $F$ to $B_p\cap F^{-1}(D)$ is is a trivial fibration over a small enough disk $D\subset F(B_p)$ centered at $p$. 
It follows that, for any $t\in D$, the intersection $X_t\cap B_p$ is  connected and thus included into a unique connected component of the fibre $X_t$.
 
Let $\cL_1$ denote the union over all $t\in D$ of the connected components of the fibres $X_t$
which intersect $B_p$. 
Note that each such connected component is a line component, since we have assumed that $s_X(a) = 0$, thus $s_X(t) = 0$ for all $t\in D$ (by reducing the radius of $D$, if needed), by Lemma \ref{l:numbers}.

We have thus associated the connected set $\cL_1$ to the chosen component  $X_a^1$.  Consider
the similar construction for each other connected component of $X_a$. Namely we start like above by choosing one point $p_i$ on each component of $X_a$ and some ball $B_{p_i}$ at $p_i$. We get in this way 
the sets $\cL_1, \cL_2, \cdots , \cL_{l_X(a)}$ where we recall that $l_X(a)$ denotes the number of connected components of $X_a$ and that this number is a local invariant over the target set, by Lemma \ref{l:numbers}. Without lowering the generality,
we may assume that the ball $D$ in the target is common to all these constructions. 

It then follows that the sets $\cL_i$ are all \textit{connected} (by definition) and \textit{pairwise disjoint}. Indeed, if this is not true, then there is 
some $t\in D$ such that the fibre $X_t$ has a connected component which belongs to more than one set $\cL_i$. 
 But by the above construction each $\cL_i$ contains precisely one connected component of $X_t$ and the number of connected components of $X_t$ is precisely $l_X(a)$ by Lemma \ref{l:numbers}.  We thus obtain a numerical contradiction.

Let us show that the sets $\cL_i$ are also \textit{open} and therefore they are manifolds. 
Let us fix $i$ and fix some $q\in X_b \cap \cL_i$ for some $b\in D$ as above. 
There exists a ball $B_q$ which has the properties of the ball $B_{p_i}$ considered above. This implies that a unique component of each fibre $X_t$ intersects $B_q$, for $t$ in  some small enough ball $D'\subset D$ centered at $b$. We claim that the component of $X_t$ intersecting $B_q$ is precisely the component belonging to $\cL_i$, as follows. Let $q_i \in X_b\cap B_{p_i}$. We consider a non self-intersecting analytic path in $X_b$ starting at $q_i$ and ending at $q$. Since compact, this can be covered by finitely many 
small balls $B_j$ with the same properties of $B_q$ or $B_{p_i}$. We then apply the reasoning of \S \ref{ss:compact} above to get that the restriction
$F_| :  F^{-1}(D) \cap \cup_{j} B_j \to D'$, for some small enough $D'$, 
is a proper submersion. Therefore, by Ehresmann's fibration theorem, this is a locally trivial fibration, hence trivial, since $D'$ is contractible. Since the fibres of this map are connected by our construction and since each of them intersects $B_{p_i}$, it follows that each fibre of $F_|$ is included into the corresponding fibre of $\cL_i$. Since $F^{-1}(D') \cap \cup_{j} B_j$ is in particular a neighbourhood of the point $q\in \cL_i$, this finishes the proof of our claim.

We conclude that the open sets $\cL_i$ together provide a partition of $F^{-1}(D)$ into open manifolds.
We  may then apply \cite[Proposition 2.7]{TZ} stated below in order to conclude that every restriction
$F_| : \cL_i \to D$ is a trivial fibration. This ends the proof of the first part of our theorem.
\fin

\begin{proposition}\label{p:bundle} \cite[Prop. 2.7]{TZ} 

Let $M \subseteq \bR ^n$ be a smooth submanifold of 
dimension $m+1$ and let $g : M \to \bR^m$ be a smooth function without singularities and such that all its fibres $g^{-1}(t)$ are closed in $\bR^n$ and diffeomorphic to $\bR$. 
Then $g$ is a {\rm C}$^{\infty}$ trivial fibration. 
In particular, $M\stackrel{\diffeo}{\simeq} \bR^{m+1}$.
\fin
\end{proposition}

\begin{remark}
 It is interesting to point out that the sets $\cL_i$ may be defined without the non-vanishing condition at $a$,  but then the sets $\cL_i$ may not exhaust $F^{-1}(D)$ or they may be not mutually disjoint. The first phenomenon is due to the vanishing of components and the second is due to the so-called ``splitting'' phenomenon which we present in the next section. 
\end{remark}

\section{The non-splitting condition}\label{s:ns}

We study here the phenomenon of splitting at infinity in families of curves of several parameters. The following definition of limit sets was used in a particular setting in \cite{TZ} and corresponds to the notation ``limsup'' used in \cite{DD}. We have learned from \cite{DD} that such limits have been considered classically by Painlev\' e and Kuratowski. 

\begin{definition}
Let $\{M_k\}_k$ be a sequence of subsets of $\bR^{m}$. A point $x\in\bR^{m}$ is called \emph{limit point}
of  $\{M_k\}_k$ if there exists a sequence of points $\{x_i\}_{i\in \bN}$ with $\lim_{i\to\infty}x_i=x$ and such that $x_i\in M_{k_i}$ for some integer sequence $\{k_i\}_i \subset \bN$ with $\lim_{i\to\infty}k_i=\infty$.

 The set of all limit points of $\{M_k\}_k$ will be denoted by $\lim M_k$. 
\end{definition}

In the remainder of this paper the point $a$ will be an  interior point of $\im F \setminus \overline{F(\Sing F)}\subset \bR^{n-1}$, like in the statement of Theorem \ref{t:main}.

\begin{remark}\label{r:2.3}
Let $\{b_k\}_{k\in \bN}$ be a sequence of points in $\im F \setminus \overline{F(\Sing F)}$ such that 
$b_k\to a$ and that, for each $k$, $X^j_{b_k}$ is a fixed connected component of 
$X_{b_k}$. Then  $\lim X^j_{b_k}$ is either empty or a union of connected components of $X_a$. This is a more precise version of \cite[Lemma 2.3(i)]{TZ} and follows from the definition of the limit and from the fact that $a$ is a regular value of $F$.
\end{remark}

\begin{definition}\label{d:split}
We say that there is \emph{no splitting at infinity at $a\in\bR^{n-1}$}, and we abbreviate this by $NS(a)$, if 
the following holds: let $\{b_k\}_{k\in \bN}$ be a sequence in $\bR^{n-1}$ such that $b_k\to a$ and let $\{p_k\}_{k\in \bN}$
be a convergent sequence in $X$ such that $F(p_k)=b_k$.  If $X_{b_k}^j$ denotes the connected component
of $X_{b_k}$ which contains $p_k$, then  the limit set $\lim X^j_{b_k}$ is connected.

 We say that there is \emph{strong non-splitting at infinity at $a\in\bR^{n-1}$}, and we abbreviate this by $SNS(a)$, if in addition to the definition of $NS(a)$ we ask the following: if all the components $X_{b_k}^j$ are compact then  the limit $\lim X^j_{b_k}$ is compact too.
\end{definition}

This notion of ``non-splitting'' $NS$ extends the one introduced in \cite{TZ} for $n=2$.

\begin{remark}\label{rmns}\ 
\begin{enumerate}
\item
 For two sequences $\{b_k\}_{k\in \bN}$ and $\{p_k\}_{k\in \bN}$ as above, if we denote by $X^j_a$ the connected component  of $X_a$ which contains
$p:=\lim p_k$ and by $X^j_{b_k}$ the connected component of $X_{b_k}$ which contains $p_k$ then, by Remark
\ref{r:2.3}, we have the inclusion $X^j_a\subset \lim X^j_{b_k}$. Therefore $NS(a)$ means that $\lim X^j_{b_k} = X^j_a$.

\item We do not know whether $NS(a)$ implies $NS(b)$ for $b$ in a small enough neighborhood of $a$. However this is true
whenever the  Betti numbers of $X_{b}$ are constant for $b$ in a neighbourhood of $a$. This follows from the second part of the proof
of Theorem \ref{t:main} presented bellow.
\end{enumerate}
\end{remark}

\subsection{Proof of Theorem \ref{t:main}, second part}

Conditions (a') and (b') are obviously necessary for $a\not\in B(F)$. Let us prove that they imply the conditions (a) and (b) of Theorem  \ref{t:main}. Since condition (a)  is obviously implied by condition (a'), the rest of the proof will be devoted to show condition (b).

 Let us denote by $X_a^1,\ldots, X_a^{l}$ the connected components of $X_a$.
  For each $j=1,\ldots,l$, we choose a point  $z_j\in X_a^j$ and, like in \S \ref{ss:compact},
we fix a small enough ball $B_j$ at $z_j$ such that $B_j \cap X_a$ is connected and that 
the restriction of $F$ to $B_j\cap F^{-1}(D_j)$ is a trivial fibration over a small enough disk $D_j\subset F(B_j)$ centered at $a$.
We may assume that the small balls $B_1,\ldots, B_l$ are pairwise disjoint. In particular for each $b\in \cap_j D_j$ we have that $B_j$ intersects exactly one connected component of $X_b$. 
We therefore may define a function $\Phi_b$ on the set $\{1,\ldots,l\}$ with values in the set of connected components $X_b^1,\ldots, X_b^{s_b}$ of $X_b$ by setting
$\Phi_b(j)$ to be the unique component of $X_b$ which intersects $B_j$.

\smallskip
\noindent
{\it Claim:} $NS(a)$ implies that there exists  a ball  $D\subset \cap_j D_j$ centered at $a$ such that, for any $b\in D$, $\Phi_b$ is a bijection.

\smallskip
\noindent
{\it Proof of the claim.}
Since $b_0(X_t)$ is constant at $a$, there is a small enough disk $D'$ centered at $a$ (which we may assume included in $\cap_j D_j$) such that $s_b =l$, for all  $b\in D'$. It is therefore enough to prove that $\Phi_b$ is injective on some small enough disk $D\subset D'$ centered at $a$.  By reductio ad absurdum, suppose that there exists a sequence of 
points $\{b_k\}_{k\in \bN}$ in $\bR^{n-1}$
such that $b_k\to a$ and $i_k,j_k\in\{1,\ldots, l\}$, $i_k\neq j_k$, such that $\Phi_{b_k}(i_k)=\Phi_{b_k}(j_k)$. Since the set of all subsets
with exactly two elements of $\{1,\ldots, l\}$ is finite, by passing to a subsequence, we may assume that
there exist $i,j\in \{1,2\ldots, l\}$, $i\neq j$, such that  $\Phi_{b_k}(i)=\Phi_{b_k}(j)$ for every $k$.  We get that the limits $\lim \Phi_{b_k}(i)$ and $\lim \Phi_{b_k}(j)$ coincide and, by Remark \ref{rmns}(a), that they are equal to some connected component of $X_a$.

On the other hand, since 
$F_{|B_i\cap F^{-1}(D_i)}$ and $F_{|B_j\cap F^{-1}(D_j)}$ are trivial fibrations it follows that the sets
 $B_i\cap F^{-1}(D_i)\cap\lim_k \Phi_{b_k}(i)$
and $B_j\cap F^{-1}(D_j)\cap\lim_k \Phi_{b_k}(j)$ are non-empty and they are contained in different components of $X_a$. This yields a contradiction.  Our claim is proved.
\medskip

Finally, let us show that we have $NV(a)$. If this were not the case then there exists a sequence
$\{b_k\}_{k\in \bN}$ converging to $a$ such that  $\lim_{k\to \infty}  \mu(b_k) = \infty$  (cf Definition of $\mu$ in \S \ref{ss:mu}). This implies that there is a connected
component $X^j_{b_k}$ of  $X_{b_k}$ such that $X^j_{b_k}\cap(\cup_1^l B_j)=\emptyset$ and this contradicts the surjectivity of
$\Phi_{b_k}$.

This ends the proof of the reduction of the second part of Theorem \ref{t:main} to its  first part.\fin

\begin{remark}
 In the above proof we need to assume the constancy of the Betti number $b_1(X_t)$ since this 
condition is \emph{not implied} by the constancy of the Betti number $b_0(X_t)$, by $NS(a)$ and by $NV(a)$ together. The reason of this behavior, which can be seen in \cite[Example 3.2]{TZ},  is the phenomenon of ``breaking'' of oval components at infinity. Nevertheless such loss of points at infinity can be avoided if instead of $NS(a)$ we ask the $SNS(a)$ condition of Definition \ref{d:split}, as shown by the following result.
\end{remark}

\begin{corollary}
In the conditions of Theorem \ref{t:main}, the following equivalence holds:\\
\centerline{  $a\not\in B(F)$ $\Leftrightarrow$ $SNS(a)$ and $NV(a)$.}
\end{corollary}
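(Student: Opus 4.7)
The strategy is to reduce the corollary to Theorem~\ref{t:main} in its $(a')+(b')$ form. The forward implication is immediate: if $a\notin B(F)$ then $F$ is a locally trivial fibration near $a$, so no fibre component can vanish or split off, and the compact/non-compact decomposition of the fibres is locally constant, giving both $NV(a)$ and $SNS(a)$. For the converse, assume $SNS(a)$ and $NV(a)$. Since $SNS(a)$ implies $NS(a)$, condition $(b')$ of Theorem~\ref{t:main} is already at hand, and it suffices to establish $(a')$, the local constancy at $a$ of $b_0(X_t)$ and $b_1(X_t)=s(t)$.

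I reuse the setup of the proof of the second part of Theorem~\ref{t:main}: choose a base point $z_j\in X_a^j$ in each connected component of $X_a$ and a pairwise disjoint family of balls $B_j$ at $z_j$ such that $F$ trivialises $B_j\cap F^{-1}(D_j)$ over a small disk $D_j$ centred at $a$, and let $\Phi_b$ send $j$ to the unique component of $X_b$ meeting $B_j$. Injectivity of $\Phi_b$ near $a$ follows from $NS(a)$ via the reductio argument already given in the proof of that second part, which invokes only $NS(a)$, Lemma~\ref{2.3}, and Remark~\ref{rmns}(a), and not constancy of $b_0$. For surjectivity, $NV(a)$ supplies an $R>0$ and a neighbourhood $D$ of $a$ such that every component of every $X_b$, $b\in D$, meets the ball of radius $R$ centred at the origin of $\bR^m$; if some $X_{b_k}^{j_k}$ with $b_k\to a$ avoided all of the $B_j$, a convergent subsequence $p_k\in X_{b_k}^{j_k}$ with $\|p_k\|\le R$ and $p_k\to p\in X_a^{j'}$, together with Lemma~\ref{2.3}, Remark~\ref{rmns}(a), and $NS(a)$, would force $\lim X_{b_k}^{j_k}=X_a^{j'}\ni z_{j'}$, giving $X_{b_k}^{j_k}\cap B_{j'}\neq\emptyset$ for large $k$, a contradiction. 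Hence $\Phi_b$ is bijective near $a$, so $b_0(X_b)$ is constant.

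It remains to show $s(b)$ is constant, which is precisely where the \emph{strong} hypothesis $SNS(a)$ is used. I claim $\Phi_b(j)$ is compact if and only if $X_a^j$ is compact. The forward direction is the tubular argument of \S\ref{ss:compact}: a saturated open neighbourhood of the compact $X_a^j$ on which $F$ restricts to a proper submersion makes $\Phi_b(j)$ a compact circle for $b$ close to $a$. For the converse, picking $p_k\in B_j\cap\Phi_{b_k}(j)$ with $p_k\to z_j\in X_a^j$ via the trivialisation on $B_j\cap F^{-1}(D_j)$, the hypothesis $SNS(a)$ forces $\lim\Phi_{b_k}(j)$ to be compact, while $NS(a)$ together with Remark~\ref{rmns}(a) identifies this limit with $X_a^j$; hence $X_a^j$ is compact. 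Consequently $s(b)=s(a)$ in a neighbourhood of $a$, so $b_1(X_b)$ is constant, condition $(a')$ is verified, and Theorem~\ref{t:main} yields $a\notin B(F)$. The most delicate step is the surjectivity argument: it requires using $NV(a)$ as a uniform (rather than sequential) statement—every component of every nearby fibre lands in a single bounded ball—so that a convergent subsequence can be extracted and $NS(a)$ applied to the resulting sequence of components.
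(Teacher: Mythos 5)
Your proof is correct and follows essentially the same route as the paper's: both directions reduce the statement to the $(a')+(b')$ form of Theorem \ref{t:main}, using $NS(a)$ for $(b')$ and deriving the constancy of $b_0$ from injectivity/surjectivity of the map $\Phi_b$ and the constancy of $b_1=s(t)$ from $SNS(a)$. The paper's own proof is only a four-line sketch, so your write-up mainly supplies the details it leaves implicit (the surjectivity of $\Phi_b$ from $NV(a)$ made uniform on a neighbourhood, and the compactness correspondence between $X_a^j$ and $\Phi_b(j)$), and these are carried out correctly.
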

\begin{proof}
Conditions $SNS(a)$ and $NV(a)$ are obviously necessary for $a\not\in B(F)$. Let us show the sufficiency. 
By Remark  \ref{rmns}(a), $NS(a)$ implies the inequality $b_0(X_t) \ge b_0(X_a)$ for $t$ in some small enough disk centered at $a$.
Next, $NS(a)$ together with $NV(a)$ imply that this inequality is an equality. What we only need in order to conclude is the constancy of $b_1(X_t)$ for $t$ in some neighbourhood of $a$, but this is exactly what the condition $SNS(a)$ insures.
\end{proof}

The conditions  $NV(a)$,  $NS(a)$ (hence $SNS(a)$ too) are conditions ``at infinity'', more precisely one can prove the following statement  in a similar way as above.

\begin{theorem} \label{t:main2}
 Let $X\subset \bR^m$ be a real nonsingular irreducible algebraic set of dimension $n$ and  let  $F : X\to \bR^{n-1}$ be an algebraic map. Let $a\in \im F$ be a regular value of $F$ and let $R\gg 1$ be large enough such that 
$X_a$ is transversal to the sphere $X\cap S_R^{m-1}$. Let us denote by $G$ the restriction of $F$ to $X\m B_R^{m}$ and by $X_t$ its fibres.

 If  $a$ is an interior point of the set $\im G \setminus \overline{G(\Sing G)}\subset \bR^{n-1}$, then $a\not\in B(G)$  if and only if we have either conditions (a) + (b) or conditions (a') + (b') of Theorem \ref{t:main}.\fin
\end{theorem}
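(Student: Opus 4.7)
The plan is to reduce Theorem \ref{t:main2} to Theorem \ref{t:main} by exploiting the transversality of $X_a$ to the sphere $X \cap S_R^{m-1}$ provided by the hypothesis. The necessity of conditions (a)+(b) (respectively (a')+(b')) for $a \not\in B(G)$ is immediate, so the content lies in the sufficiency.

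First, since transversality is an open condition, there exists a neighbourhood $D_0 \subset \bR^{n-1}$ of $a$ such that $X_b$ is transversal to $X \cap S_R^{m-1}$ for every $b \in D_0$. Consequently, the restriction of $F$ to $F^{-1}(D_0) \cap (X \cap \overline{B_R^m})$ is a proper submersion of manifolds with boundary and is therefore, by Ehresmann's fibration theorem, a locally trivial fibration over some smaller ball centred at $a$, hence a trivial fibration. I would then use this to replace $F$ by $G$: the preimage $F^{-1}(D)$ splits as the union of the trivially fibered interior piece and $G^{-1}(D)$, glued along the codimension-one submanifold $F^{-1}(D) \cap (X \cap S_R^{m-1})$. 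Transversality yields a product collar structure along this common boundary in both pieces, and a partition-of-unity gluing argument shows that $F$ is locally trivial over a suitable ball $D$ if and only if $G$ is.

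Second, I would repeat the proof of Theorem \ref{t:main} for the map $G$. The regular fibres $G^{-1}(t)$ are one-dimensional manifolds whose connected components are now either circles, closed arcs with both endpoints on the sphere, half-lines with one endpoint on the sphere, or bi-infinite affine lines. The argument of \S \ref{ss:compact} handling compact circle components adapts immediately. For the non-compact components treated in \S \ref{ss:line}, whenever a chosen reference point $z_j$ lies on the boundary sphere I would replace the small ball $B_{z_j}$ by a half-ball compatible with the collar and then proceed as before; Lemma \ref{l:numbers}, Lemma \ref{l:nv}, Lemma \ref{2.3}, and the final application of \cite[Prop.~2.7]{TZ} carry over verbatim. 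The conditions $NV(a)$, $NS(a)$ and $SNS(a)$ from \S \ref{ss:mu} and \S \ref{s:ns} are formulated purely in terms of behaviour at infinity and have identical meaning for $G$ as for $F$, so the argument for the (a')+(b') version also transfers without change.

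The main obstacle will be correctly accommodating the boundary arc and ray components when constructing the connected trivialisations $\cL_i$ of \S \ref{ss:line}. This is essentially bookkeeping once one observes that the transversality of $X_b$ to $X \cap S_R^{m-1}$ provides a local product structure near the sphere which is compatible with the partitions introduced in the proof of Theorem \ref{t:main}. No new ideas beyond this collar bookkeeping are required, which is presumably why the authors simply remark that the proof proceeds in a similar way.
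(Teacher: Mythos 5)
The paper gives no proof of Theorem \ref{t:main2} beyond the parenthetical ``in a similar way as above,'' and your second and third paragraphs supply exactly the intended argument: rerun the proof of Theorem \ref{t:main} with $X$ replaced by the semi-algebraic manifold-with-boundary $X\m B_R^m$, using the openness of transversality to $S_R^{m-1}$ to get a collar and the boundary version of Ehresmann's theorem, and accounting for the two new component types (arcs and rays meeting the sphere). Two small caveats. First, your opening paragraph's reduction --- ``$F$ is locally trivial over $D$ if and only if $G$ is'' --- is not needed for the statement, which concerns only $B(G)$ and conditions formulated on the fibres of $G$; moreover its forward implication is not justified as stated, since a trivialisation of $F$ over $D$ need not preserve the sphere $S_R^{m-1}$, so it does not restrict to one of $G$ without a further straightening argument. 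Since your actual proof is the direct adaptation in the later paragraphs, this is a harmless detour rather than a gap. Second, ``carry over verbatim'' is slightly optimistic for Lemma \ref{l:numbers}: its proof invokes \cite[Theorem 1.1]{TZ} for the surface $G^{-1}(L_{ab})$, which now has boundary, and the two counts $s(b)$, $l(b)$ must be refined into four locally constant counts (circles, arcs, rays, lines); the constancy of the boundary data comes from the interior/collar trivialisation you set up, so this is precisely the ``collar bookkeeping'' you acknowledge, but it does need to be done.
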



\end{document}